\documentclass[12pt]{amsart}



\usepackage{amssymb,amsmath, mathrsfs,amsfonts}

\usepackage{enumitem}

\usepackage{graphicx}

\makeatletter
\@namedef{subjclassname@2020}{%
	\textup{2020} Mathematics Subject Classification}
\makeatother

\usepackage[T1]{fontenc}


\newtheorem{theorem}{Theorem}[section]

\newtheorem{proposition}[theorem]{Proposition}



\theoremstyle{definition}



\numberwithin{equation}{section}


\frenchspacing

\topmargin= .2cm
\textheight= 20cm
\textwidth= 32cc
\baselineskip=10pt

\evensidemargin= .9cm
\oddsidemargin= .9cm




\begin{document}
	
	
	\baselineskip=17pt
	
	
	\title[An Exponential Diophantine equation $x^2+3^{\alpha} 113^{\beta}=y^{\mathfrak{n}}$]{An Exponential Diophantine equation $x^2+3^{\alpha} 113^{\beta}=y^{\mathfrak{n}}$}
	
	\author[S.Muthuvel]{S.Muthuvel}
	
	\address{Department of Mathematics, Faculty of Engineering and Technology, SRM Institute of Science and Technology, Vadapalani Campus, No.1 Jawaharlal Nehru Salai, Vadapalani, Chennai-600026, Tamil Nadu, India.}
	\email{muthushan15@gmail.com, ms3081@srmist.edu.in}
	
	\author[R.Venkatraman]{R.Venkatraman}
	\address{Department of Mathematics, Faculty of Engineering and Technology, SRM Institute of Science and Technology, Vadapalani Campus, No.1 Jawaharlal Nehru Salai, Vadapalani, Chennai-600026, Tamil Nadu, India.}
	\email{venkatrr1@srmist.edu.in}
	
	\date{}
	
	\begin{abstract}
		The objective of the paper is to determine the complete solutions for the Diophantine equation $x^2 + 3^{\alpha}113^{\beta} = y^{\mathfrak{n}}$ in positive integers $x$ and $y$ (where $x, y \geq 1$), non-negative exponents $\alpha$ and $\beta$, and an integer $\mathfrak{n}\geq 3$, subject to the condition $\text{gcd}(x, y) = 1$.
	\end{abstract}
	
	\subjclass[2020]{11D41, 11D61, 11Y50}
	
	\keywords{Diophantine equations, Integer solution, S-integers, Lucas sequence, Primitive divisor}
	
	\maketitle
	
	\section{Introduction}
	\noindent Consider the Diophantine equation of the form
	\begin{eqnarray}
		x^2+\mathcal{C}=y^\mathfrak{n}, \quad x,y \geq 1, \quad \mathfrak{n} \geq 3, \label{main-1}
	\end{eqnarray}
	where $\mathcal{C}$ is a fixed positive integer. The initial discovery of positive integer solutions for the given equation dates back almost 17 decades ago \cite{23}. It has been proven that the equation invariably has only a finite number of solutions that are positive integers \cite{21}. Early investigations focused on Eq. \eqref{main-1} when $\mathcal{C}=c_{0}$ is a constant integer \cite{20,29,30}. In \cite{16}, Cohn presented a solution to equation \eqref{main-1} with the stipulation that $gcd(x,y)=1$, while taking into account the parameter $\mathcal{C}$ within the interval $1 \leq \mathcal{C} \leq 100$, excluding particular $\mathcal{C}$ values. The work in \cite{28} addressed additional $\mathcal{C}$ values within the same range, and the remaining values were addressed in \cite{10}. Over the years, researchers have investigated not only instances where $\mathcal{C}=\mathfrak{p}^k$ with a specific prime number $\mathfrak{p}$ \cite{1,2,3,4,5,15,27} but also for the case of a general prime number $\mathfrak{p}$ \cite{6,8,22,31,36}.
	
	Consider a set of primes $\mathcal{S}=\{\mathfrak{p}_{1},\mathfrak{p}_{2},\ldots,\mathfrak{p}_{k}\}$. Recent investigations focus on Eq. \eqref{main-1}, specifically when $\mathcal{C}$ is the product of prime powers $\mathfrak{p}^{k}$, with $\mathfrak{p} \in \mathcal{S}$ for any non-negative integer $k$ \cite{alan,  9,12,13,17,18,19,24,25,26,Muthu_exp_397,32,33,rayaguru,34,35}. Additionally, in \cite{37}, the authors explored positive integer solutions to Eq. \eqref{main-1} in a more general context, considering $\mathcal{C}=2^a \mathfrak{p}^b$ for any odd prime $\mathfrak{p}$.
	
	The primary focus of this paper is the Diophantine equation given by 
	\begin{eqnarray} 
		x^2+3^{\alpha} 113^{\beta}=y^{\mathfrak{n}} , \ \mathfrak{n} \geq 3,   \label{1} 
	\end{eqnarray}
	where $\alpha,\beta \geq 0 ~\text{and}~ x,y \geq 1 ~\text{with}~ \text{gcd}(x,y)=1$.\\
	Now, we proceed to establish the subsequent result.
	\begin{theorem}
		The equation \eqref{1} has the following solutions:
		\begin{align*}
			(x,y,\mathfrak{n},\alpha,\beta)=& (2, 7, 3, 1, 1), (1232, 115, 3, 3, 1), (23642486, 82375, 3, 9, 1),\\
			&(46, 13, 3, 4, 0), (10, 7, 3, 5, 0)
		\end{align*}
		excluding the case where $19 \nmid \mathfrak{n}$.
	\end{theorem}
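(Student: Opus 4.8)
The plan is to split the analysis according to the arithmetic of the exponent $\mathfrak{n}$. Since $\mathfrak{n}\geq 3$, it is divisible either by an odd prime $p$ or (failing that) by $4$, so writing $y^{\mathfrak{n}}=(y^{\mathfrak{n}/q})^{q}$ for such a divisor $q$ reduces everything to the cases $\mathfrak{n}=4$ and $\mathfrak{n}=p$ for an odd prime $p$ (with $p=3$ treated on its own). The case $\mathfrak{n}=4$ I would dispatch first and quickly: \eqref{1} becomes $(y^2-x)(y^2+x)=3^{\alpha}113^{\beta}$, and since $\gcd(x,y)=1$ forces the two factors to be coprime (up to a factor of $2$, excluded here because the right side is odd unless $\alpha=\beta=0$), each factor is a prescribed product of prime powers from $\{3,113\}$, giving a short finite divisor search with no admissible output. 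Throughout I would record the parities of $\alpha$ and $\beta$, since the squarefree part $d$ of $3^{\alpha}113^{\beta}$ — and hence the quadratic field in which I factor — depends on them, and I would use $113\equiv 1\pmod 8$, $113\equiv 2\pmod 3$, and the value of $3^{\alpha}113^{\beta}\bmod 4$ to pin down residues of $x,y$ and discard degenerate subcases.

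For $\mathfrak{n}=p\geq 5$ I would factor $x^2+3^{\alpha}113^{\beta}=y^{p}$ over the imaginary quadratic order attached to $\mathbb{Q}(\sqrt{-d})$. When $\alpha,\beta$ are both even the constant is a perfect square and the equation is a sum of two coprime squares equal to a $p$-th power, handled in $\mathbb{Z}[i]$. Otherwise, coprimality forces $x+\sqrt{-d}\,s$ and its conjugate to be, up to units and the controlled contribution of the ramified primes above $3$ and $113$, associate to $p$-th powers; equating the $\sqrt{-d}$-components produces a Lucas (or Lehmer) term $\tfrac{\gamma^{p}-\bar\gamma^{p}}{\gamma-\bar\gamma}=\pm 3^{a}113^{b}$. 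I would then invoke the Bilu–Hanrot–Voutier primitive divisor theorem: for $p>30$ this term must have a primitive prime divisor, which is impossible once its prime factors are confined to $\{3,113\}$. This bounds $p\le 30$, and each remaining prime $p\in\{5,7,11,13,17,19,23,29\}$ is then eliminated by matching against the finite table of exceptional Lucas/Lehmer pairs, reinforced by local congruence obstructions.

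The cube case $\mathfrak{n}=3$ is where the five listed solutions arise, and I would reduce it to finitely many Mordell equations. Because a sixth power is simultaneously a square and a cube, I would write $\alpha=6a+\alpha_0$ and $\beta=6b+\beta_0$ with $0\le\alpha_0,\beta_0\le 5$, absorb $3^{a}113^{b}$ into new variables, and transform \eqref{1} into one of the $36$ equations $Y^{2}=X^{3}-3^{\alpha_0}113^{\beta_0}$, to be solved for $S$-integral points with $S=\{3,113\}$. Computing all such points (by descent together with linear forms in elliptic logarithms, or an established computer-algebra routine) should return exactly the tuples $(2,7,3,1,1)$, $(1232,115,3,3,1)$, $(23642486,82375,3,9,1)$, $(46,13,3,4,0)$, and $(10,7,3,5,0)$.

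The main obstacle I anticipate is the primitive-divisor bookkeeping for the small primes, and specifically the prime $p=19$: an exceptional Lucas pair relative to $\{3,113\}$ appears to survive the sieve, so the congruence and $S$-integer tests do not close $\mathfrak{n}\equiv 0\pmod{19}$ cleanly — which is exactly why the statement sets that case aside. A secondary difficulty is justifying the factorization step when the relevant quadratic order is non-maximal or the class number creates ambiguity in the ``associate to a $p$-th power'' conclusion; this needs a careful index and unit computation before the Lucas sequence can be extracted reliably.
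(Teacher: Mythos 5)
Your overall architecture coincides with the paper's: the same reduction of $\mathfrak{n}$ to the cases $3$, $4$, and an odd prime, the same passage to the $36$ Mordell curves $Y^2=X^3-3^{\alpha_0}113^{\beta_0}$ solved in $\{3,113\}$-integers for $\mathfrak{n}=3$, and the same factorization over $\mathbb{Q}(\sqrt{-d})$, $d\in\{1,3,113,339\}$, producing a Lucas term whose prime factors lie in $\{3,113\}$. However, your treatment of the odd primes $\mathfrak{n}\geq 5$ has a genuine gap, in two respects. First, the Bilu--Hanrot--Voutier step is logically incomplete: for $p>30$ the existence of a primitive divisor is not by itself ``impossible once its prime factors are confined to $\{3,113\}$'' --- the primitive divisor could perfectly well \emph{be} $3$ or $113$. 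What closes this is the congruence property of primitive divisors, $\mathfrak{q}\equiv\left(\frac{(\eta-\overline{\eta})^{2}}{\mathfrak{q}}\right)\equiv\pm 1\pmod{\mathfrak{n}}$, which is how the paper argues: since $\mathfrak{n}\geq 5$ rules out $\mathfrak{q}=3$, one needs $113\equiv\pm 1\pmod{\mathfrak{n}}$, i.e. $\mathfrak{n}\mid 112$ or $\mathfrak{n}\mid 114$, forcing $\mathfrak{n}\in\{7,19\}$ for \emph{all} odd primes at once (the case split $d\in\{1,113\}$ versus $d\in\{3,339\}$ giving $7$ and $19$ respectively); no bound $p\leq 30$ followed by a prime-by-prime sieve is needed, and without the congruence your sieve does not close even for $p>30$.

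Second, and more seriously, your claim that every remaining prime $p\in\{5,7,11,13,17,19,23,29\}$ is ``eliminated by matching against the finite table of exceptional Lucas/Lehmer pairs, reinforced by local congruence obstructions'' fails at $p=7$. Since $113\equiv 1\pmod 7$, the prime $7$ survives the primitive-divisor sieve exactly as $19$ does (via $113\equiv -1\pmod{19}$), and neither table-matching nor congruences dispose of it. The paper must do substantial work here: with $d=1$ it derives $t(7s^6-35s^4t^2+21s^2t^4-t^6)=3^{\mathfrak{a}}113^{\mathfrak{b}}$, splits into the subcases $t=\pm 1$ and $t=\pm 3^{\mathfrak{a}}$, and reduces each to elliptic curves $V^3-35\delta V^2+147\delta^{2}V-49\delta^{3}=(7\delta^{2}Y)^2$ whose integral and $\{3\}$-integral points are computed in MAGMA for each admissible $\delta$. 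Your proposal sets aside only $p=19$ and silently assumes $p=7$ is unproblematic, so as written the proof cannot be completed for $7\mid\mathfrak{n}$. A smaller issue of the same kind: for $\mathfrak{n}=4$ your factorization $(y^2-x)(y^2+x)=3^{\alpha}113^{\beta}$ leaves you with exponential equations such as $2y^2=3^{\alpha}+113^{\beta}$ and $2y^2=1+3^{\alpha}113^{\beta}$, which are not a ``short finite divisor search''; the paper instead computes $\{3,113\}$-integral points on the $16$ Ljunggren quartics $A^2=B^4-3^{i}113^{j}$.
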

	
	\section{Preliminaries}
	Consider algebraic integers $\eta$ and $\overline{\eta}$. A Lucas pair, represented as $(\eta, \overline{\eta})$, is defined by the conditions that $\eta + \overline{\eta}$ and $\eta\overline{\eta}$ are non-zero coprime rational integers, and $\displaystyle \frac{\eta}{\overline{\eta}}$ is not a root of unity. \\
	The sequences of Lucas numbers are defined in correspondence with any given Lucas pair $(\eta, \overline{\eta})$ as follows:
	\begin{eqnarray*}
		\mathcal{L}_{\mathfrak{n}}(\eta,\overline{\eta})=\frac{\eta^{\mathfrak{n}}-\overline{\eta}^{\mathfrak{n}}}{\eta-\overline{\eta}}, \quad \mathfrak{n}=0,1,2,\ldots
	\end{eqnarray*}
	
	The existence of primitive divisors for $\mathcal{L}_{\mathfrak{n}}(\eta,\overline{\eta})$ is a crucial concept in the realm of Lucas sequences.
	
	A prime number $\mathfrak{p}$ is defined as a primitive divisor of $\mathcal{L}_{\mathfrak{n}}(\eta,\overline{\eta})$ if $\mathfrak{p} \mid \mathcal{L}_{\mathfrak{n}}(\eta,\overline{\eta})$ and $\mathfrak{p} \nmid (\eta-\overline{\eta})^{2}\prod_{i=1}^{n-1}\mathcal{L}_{i}(\eta,\overline{\eta})$ for $\mathfrak{n}>1$. Moreover, a primitive divisor $\mathfrak{q}$ of $\mathcal{L}_{\mathfrak{n}}(\eta,\overline{\eta})$ satisfies $\mathfrak{q} \equiv \left(\frac{(\eta-\overline{\eta})^{2}}{\mathfrak{q}}\right) \pmod{\mathfrak{n}}$, where $\left(\frac{*}{\mathfrak{q}}\right)$ denotes the Legendre symbol \cite{14}.
	
	For $\mathfrak{n}>4$ and $\mathfrak{n} \neq 6$, all $\mathfrak{n}$-th terms of any Lucas sequence $\mathcal{L}_{\mathfrak{n}}(\eta,\overline{\eta})$ possess primitive divisors, except for certain finite values of parameters $\eta, \overline{\eta}$, and $\mathfrak{n}$ \cite{7}.
	\section{Proof of Theorem 1.1}
	For the cases of $\mathfrak{n} = 3$, $\mathfrak{n} = 4$, and $\mathfrak{n} \geq 5$, Equation \eqref{1} will be investigated independently as follows:
	\begin{proposition}
		If $\mathfrak{n} = 3$, the Eq. \eqref{1} has the following solutions:
		\begin{align*}
			(x, y, \alpha, \beta) =& (2, 7, 1, 1), (1232, 115, 3, 1), (23642486, 82375, 9, 1), \\
			&(46, 13, 4, 0), (10, 7, 5, 0)
		\end{align*}
	\end{proposition}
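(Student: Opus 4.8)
The plan is to settle the cubic case by classical factorisation in imaginary quadratic fields, organised by the parities of $\alpha$ and $\beta$. Writing $3^{\alpha}113^{\beta}=d\,w^{2}$ with $d$ the squarefree part and $w=3^{\lfloor\alpha/2\rfloor}113^{\lfloor\beta/2\rfloor}$, the four parity patterns of $(\alpha,\beta)$ give $d\in\{1,3,113,339\}$. In each case I set $K=\mathbb{Q}(\sqrt{-d})$, with ring of integers $\mathcal{O}_{K}$, and factor \eqref{1} as
\[
(x+w\sqrt{-d})(x-w\sqrt{-d})=y^{3}\qquad\text{in }\mathcal{O}_{K}.
\]

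First I would observe that $y$ is necessarily odd: were $2\mid y$, then $x$ would be odd, so $x^{2}\equiv1\pmod 8$, while $3^{\alpha}113^{\beta}\equiv 3^{\alpha}\in\{1,3\}\pmod 8$ because $113\equiv1\pmod 8$; hence $y^{3}=x^{2}+3^{\alpha}113^{\beta}\not\equiv0\pmod 8$, a contradiction. Next I would show the two factors generate coprime ideals. A prime ideal $\mathfrak{q}$ dividing both divides $(2x)$ and $(2w\sqrt{-d})$, and from $\mathfrak{q}^{2}\mid(y)^{3}$ also $(y)$; since $\gcd(x,y)=1$ rules out $\mathfrak{q}\mid(x)$, one is forced into $\mathfrak{q}\mid(2)$, whence $2\mid y$, contradicting oddness. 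Consequently $(x+w\sqrt{-d})=\mathfrak{a}^{3}$ for some ideal $\mathfrak{a}$ of $\mathcal{O}_{K}$.

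For $d\in\{1,3,113\}$ the class numbers are $1,1,8$ respectively, all coprime to $3$, so $[\mathfrak{a}]=1$ and $\mathfrak{a}=(\gamma)$ is principal. Since the cube map is onto the units in $\mathbb{Q}(i)$ and $\mathbb{Q}(\sqrt{-113})$ while leaving three unit cosets in $\mathbb{Q}(\sqrt{-3})$, I obtain $x+w\sqrt{-d}=\varepsilon\gamma^{3}$ with $\varepsilon$ ranging over these finitely many representatives. Expanding $\gamma=a+b\sqrt{-d}$ (or $\gamma=a+b\omega$, $\omega=\tfrac{1+\sqrt{-d}}{2}$, when $d\equiv3\pmod4$) and comparing the coefficient of $\sqrt{-d}$ produces a relation of the form $w=b\,q(a,b)$ with $q$ a binary quadratic form. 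As $w$ is a product of the primes $3$ and $113$ only, the divisor $b\mid w$ takes finitely many values, each collapsing to an elementary equation in $a$; solving these recovers $(46,13,4,0)$ from $d=1$ and $(10,7,5,0)$ from $d=3$, and shows $d=113$ yields no solution.

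The principal obstacle is $d=339$, where $h(\mathbb{Q}(\sqrt{-339}))=6$ is divisible by $3$, so $\mathfrak{a}$ may fail to be principal; indeed the solution $(2,7,3,1,1)$ gives $(2+\sqrt{-339})=\mathfrak{p}^{3}$ for a prime $\mathfrak{p}$ of norm $7$ whose class has order $3$ (the norm form $a^{2}+ab+85b^{2}$ never equals $7$, so $\mathfrak{p}$ is non-principal). Here $[\mathfrak{a}]^{3}=1$ forces $[\mathfrak{a}]\in\{1,[\mathfrak{p}],[\mathfrak{p}]^{2}\}$. The case $[\mathfrak{a}]=1$ proceeds as before; when $[\mathfrak{a}]=[\mathfrak{p}]$ or $[\mathfrak{p}]^{2}$ I multiply through by a generator of $\overline{\mathfrak{p}}^{\,3}=(2-\sqrt{-339})$ or of $\mathfrak{p}^{3}=(2+\sqrt{-339})$, reducing the relation to $(2\mp\sqrt{-339})(x+w\sqrt{-339})=\pm\pi^{3}$ with $\pi\in\mathcal{O}_{K}$, and once more compare real and imaginary parts. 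Expanding these cubes, imposing that $w$ be a $\{3,113\}$-product, and solving the resulting finite systems is the most delicate part of the argument; this is where the three $d=339$ solutions, including the large point $(23642486,82375,3,9,1)$, must be produced and the list shown to be exhaustive.
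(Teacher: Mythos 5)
Your strategy is genuinely different from the paper's: the paper writes $\alpha=6\mathfrak{a}_1+\mathfrak{i}$, $\beta=6\mathfrak{b}_1+\mathfrak{j}$ and reduces the whole case $\mathfrak{n}=3$ to computing all $\{3,113\}$-integral points on the $36$ elliptic curves $L^2=M^3-3^{\mathfrak{i}}113^{\mathfrak{j}}$ via MAGMA's \texttt{SIntegralPoints}, whereas you factor in $\mathbb{Q}(\sqrt{-d})$, $d\in\{1,3,113,339\}$. Your preliminary steps are sound and correctly verified: the oddness of $y$, the coprimality of the conjugate ideals, the unit analysis (three unit cosets modulo cubes only for $d=3$), and in particular the identification of the $3$-torsion obstruction at $d=339$, where $h=6$, together with the correct observation that $(2+\sqrt{-339})=\mathfrak{p}^3$ with $\mathfrak{p}$ non-principal of norm $7$ (indeed $a^2+ab+85b^2=7$ is impossible). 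This is exactly why the paper quarantines $\mathfrak{n}=3$ away from its Proposition 3.3, where coprimality of $\mathfrak{n}$ with the class numbers $1,6,8$ is essential.

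The gap is in the endgame, and it sits precisely where all five solutions must be produced. After comparing coefficients you obtain $w=b\,q(a,b)$, but $w=3^{\lfloor\alpha/2\rfloor}113^{\lfloor\beta/2\rfloor}$ is \emph{not} a fixed integer; it is an unknown ranging over all products $3^u113^v$. So the assertion that ``the divisor $b\mid w$ takes finitely many values'' is false: $b$ ranges over infinitely many values $\pm 3^{u'}113^{v'}$, and for each parity pattern one is left with exponential equations with unbounded exponents, e.g.\ for $d=1$ the system $b=\pm3^{u'}113^{v'}$, $3a^2-b^2=\pm3^{u''}113^{v''}$, whose subcases include Ramanujan--Nagell-type equations such as $3a^2-1=113^t$. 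Some of these succumb to congruences, but they are not ``elementary equations in $a$.'' The situation is worse in the non-principal $d=339$ cases: eliminating $x$ from the real and imaginary parts of $(2\mp\sqrt{-339})(x+w\sqrt{-339})=\pi^3$ (the relevant determinant is $4+339=343$) yields a cubic Thue--Mahler equation, of the shape
\begin{equation*}
a^3+6a^2b-1017ab^2-678b^3=\pm\,2744\cdot 3^{u}113^{v},
\end{equation*}
with $u,v$ unbounded. Calling these ``finite systems'' mischaracterizes them: their complete resolution requires Baker-type lower bounds for linear forms in logarithms plus lattice reduction (Tzanakis--de Weger), or a conversion back to $S$-integral points on elliptic curves --- which is exactly the computation the paper performs directly. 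As written, your proposal defers, rather than supplies, the proof that the list of solutions (in particular the point $x=23642486$) is exhaustive.
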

	
	\begin{proof}
		When $\mathfrak{n} = 3$, represent $\alpha = 6\mathfrak{a}_1 + \mathfrak{i}$ and $\beta = 6\mathfrak{b}_1 + \mathfrak{j}$, where $\mathfrak{i}, \mathfrak{j} \in \{0, 1, \ldots, 5\}$. Subsequently, Eq. \eqref{1} takes the form of an elliptic curve 
		\begin{align*}
			L^2 = M^3 - 3^{\mathfrak{i}}113^{\mathfrak{j}}
		\end{align*}
		where $L =\displaystyle \frac{x}{3^{3\mathfrak{a}_1}113^{3\mathfrak{b}_1}}$ and $M = \displaystyle\frac{y}{3^{2\mathfrak{a}_1}113^{2\mathfrak{b}_1}}$. 
		
		Hence, the problem of finding positive integer solutions for Eq. \eqref{1}  is reduced to finding all $\{3,113\}$-integer points on the relevant 36 elliptic curves for every $\mathfrak{i}$ and $\mathfrak{j}$. It's crucial to emphasize that for every finite set of prime numbers $\mathcal{S}$, an $\mathcal{S}$-integer is characterized as a rational number $\displaystyle \frac{r}{s}$ where $r$ and $s > 0$ are relatively prime integers, and any prime factor of $s$ is a member of the set $\mathcal{S}$. 
		
		At this point, we utilized the MAGMA function SIntegralPoints to locate all $\mathcal{S}$-integral points on the given curves. For $\mathcal{S} = \{3, 113\}$ \cite{11}, the identified points are presented below:
		\begin{align*}
			(M, L, \mathfrak{i}, \mathfrak{j}) =& (1, 0, 0, 0), (113, 0, 0, 3), (7, 2, 1, 1), (537, 12444, 2, 1), (3, 0, 3, 0), \\
			&(15, 18, 3, 1), (51, 360, 3, 1), (115, 1232, 3, 1), (303, 5274, 3, 1),  \\
			&\left(\frac{82375}{9}, \frac{23642486}{27}, 3, 1\right), (353103, 209822526, 3, 1), \\
			&(339, 0, 3, 3), (13, 46, 4, 0), (7, 10, 5, 0)
		\end{align*}
		Considering that $x$ and $y$ are positive integers with no common factors, it's important to highlight that only five among these points yield a solution for Eq. \eqref{1}. With this, the proof comes to an end.
	\end{proof} 
	\begin{proposition}
		If $\mathfrak{n} = 4$,  the Eq. \eqref{1} has no positive integer solutions.
	\end{proposition}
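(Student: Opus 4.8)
The plan is to exploit the factorization of $y^4-x^2$ as a difference of two squares, after first pinning down the parities with congruences. Since $3^{\alpha}113^{\beta}$ is odd and equals $y^4-x^2$, the integers $x,y$ must have opposite parity. The assumption that $x$ is odd (so $y$ is even) forces $3^{\alpha}113^{\beta}=y^4-x^2\equiv -1\pmod 8$, which is impossible because $113\equiv 1\pmod 8$ and $3^{\alpha}\equiv 1,3\pmod 8$; hence $x$ is even and $y$ is odd. Working modulo $16$ then gives $y^4\equiv 1$ and $113^{\beta}\equiv 1$, so $3^{\alpha}\equiv 1-x^2\pmod{16}$. Since $x^2\in\{0,4\}\pmod{16}$ and $3^{\alpha}\in\{1,3,9,11\}\pmod{16}$, the only consistent possibility is $x\equiv 0\pmod 4$ together with $\alpha\equiv 0\pmod 4$; in particular $3^{\alpha}$ is a perfect fourth power.

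With $4\mid\alpha$ the case $\beta=0$ falls out immediately: writing $3^{\alpha}=(3^{\alpha/4})^4$, Eq.~\eqref{1} becomes $y^4-(3^{\alpha/4})^4=x^2$, and the classical Fermat result that $a^4-b^4=c^2$ has no solution in positive integers rules this out. For $\beta\ge 1$ I would turn to the factorization $(y^2-x)(y^2+x)=3^{\alpha}113^{\beta}$. A gcd computation shows the two factors are coprime: any common divisor divides both $2x$ and $2y^2$, hence divides $2\gcd(x,y^2)=2$, and since $3^{\alpha}113^{\beta}$ is odd both factors are odd. Therefore $\{y^2-x,\,y^2+x\}$ is, up to order, either $\{1,\,3^{\alpha}113^{\beta}\}$ or $\{3^{\alpha},\,113^{\beta}\}$, with $y^2+x$ the larger member because $x\ge 1$. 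Adding the two equations in each case produces the auxiliary equations $2y^2=1+3^{\alpha}113^{\beta}$ and $2y^2=3^{\alpha}+113^{\beta}$. A reduction modulo $3$ cleans these up: in the first case $\alpha\ge 1$ would force $y^2\equiv 2\pmod 3$, impossible, so $\alpha=0$ and the equation is $2y^2=1+113^{\beta}$; in the second case (with $\alpha\ge 1$) the same congruence forces $\beta$ to be odd.

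The remaining task, which I expect to be the main obstacle, is to show that $2y^2=1+113^{\beta}$ and $2y^2=3^{\alpha}+113^{\beta}$ admit no admissible solutions once the exponents range freely. Pure congruences do not terminate here: for example, when $\beta=2\nu$ is even the first equation becomes the negative Pell equation $(113^{\nu})^2-2y^2=-1$, whose solutions form the Lucas-type sequence $1,7,41,239,\dots$, and one must rule out that any term beyond the trivial one is a power of $113$. This is precisely where the machinery of the Preliminaries enters: each auxiliary equation can be recast as the assertion that a term $\mathcal{L}_{m}(\eta,\overline{\eta})$ of a suitable Lucas sequence is a prime power (of $113$, or of $3$), and the primitive divisor theorem guarantees that $\mathcal{L}_{m}$ acquires a primitive prime divisor -- hence a prime factor different from $113$ and $3$ -- for every index $m$ outside an explicit finite list. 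I would therefore set up the appropriate Lucas pair in each case so that the free exponent becomes the index $m$, invoke the primitive divisor theorem to bound $m$, and finish by checking the finitely many surviving small exponents directly, each of which fails the requirement that $\tfrac12(1+113^{\beta})$ or $\tfrac12(3^{\alpha}+113^{\beta})$ be a perfect square. The delicate points will be selecting the Lucas pairs correctly and disposing of the handful of exceptional indices left unresolved by the theorem.
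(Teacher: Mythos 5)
Your elementary reduction is correct as far as it goes, and it is genuinely different from what the paper does (the paper writes $\alpha=4a_1+i$, $\beta=4b_1+j$, turns Eq.~\eqref{1} into the sixteen quartic curves $A^2=B^4-3^i113^j$, and lets MAGMA's SIntegralLjunggrenPoints enumerate their $\{3,113\}$-integral points). Your congruences modulo $8$ and $16$ do give $x\equiv 0\pmod 4$ and $4\mid\alpha$, the Fermat argument disposes of $\beta=0$, and the coprime factorization $(y^2-x)(y^2+x)=3^{\alpha}113^{\beta}$ together with the modulo-$3$ analysis correctly leaves exactly the two auxiliary equations $2y^2=1+113^{\beta}$ and $2y^2=3^{\alpha}+113^{\beta}$ (with $4\mid\alpha$ and, in the second case, $\beta$ odd). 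The trouble is that the part you defer, killing these two equations, is the entire difficulty of the proposition, and the plan you sketch for it does not go through as stated.

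There are two concrete gaps. First, the inference ``a primitive prime divisor --- hence a prime factor different from $113$ and $3$'' is false: a primitive divisor of $\mathcal{L}_m(\eta,\overline{\eta})$ is only guaranteed not to divide $(\eta-\overline{\eta})^2\prod_{i<m}\mathcal{L}_i(\eta,\overline{\eta})$, so nothing prevents $113$ itself from being the primitive divisor. In your own example, the sequence $1,7,41,239,\dots$ attached to $u^2-2v^2=-1$ comes from the pair $\eta=1+\sqrt2$, $\overline{\eta}=1-\sqrt2$ with $(\eta-\overline{\eta})^2=8$, so $113$ is a perfectly admissible primitive divisor. To exclude it one must use the congruence $q\equiv\left(\frac{(\eta-\overline{\eta})^2}{q}\right)\pmod m$ quoted in the Preliminaries, which for $q=113$ forces $m\mid 112$ or $m\mid 114$ and only then leaves a finite check; this is precisely how the paper argues in its $\mathfrak{n}\ge 5$ proof, and your sketch omits it. Second, and more seriously, the equation $2y^2=3^{\alpha}+113^{\beta}$ has two independent free exponents, so there is no single Lucas pair for which ``the free exponent becomes the index $m$''. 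The natural factorization, e.g. $113^{\beta}=\left(y\sqrt2-3^{\alpha/2}\right)\left(y\sqrt2+3^{\alpha/2}\right)$ in $\mathbb{Z}[\sqrt 2]$, lives in a real quadratic field, where the unit group is infinite; the step that powers the paper's imaginary-quadratic argument for $\mathfrak{n}\ge 5$ --- absorbing the units into $\xi^{\mathfrak{n}}$ and reading off $\mathfrak{e}=\mathcal{L}_{\mathfrak{n}}t$ --- is simply unavailable there. What one actually obtains is a family of Pell-type equations $2y^2-113w^2=z^2$ whose right-hand side varies with $\alpha$, not one sequence with a varying index, so the primitive divisor theorem does not reduce this case to a finite computation. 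The proof is therefore incomplete exactly where it needed to be supplied, which is presumably why the paper (like its predecessors in this literature) handles $\mathfrak{n}=4$ by the quartic-curve computation rather than by factorization.
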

	
	\begin{proof}
		Let $\mathfrak{n} = 4$. Initially, express $\alpha = 4{a}_1 + i$ and $\beta = 4{b}_1 + j$, where $i, j \in \{0, 1, 2, 3\}$. Consequently, Eq. \eqref{1} takes the form
		\[
		A^2 = B^4 - 3^{i}113^{j}
		\]
		where $A = \displaystyle \frac{x}{3^{2a_1}113^{2b_1}}$ and $B = \displaystyle \frac{y}{3^{a_1}113^{b_1}}$. 
		
		Identifying all $\mathcal{S} = \{3, 113\}$-integral points on the associated 16 quartic curves corresponds to the search for every integer solution of Eq. \eqref{1}.
		
		By employing the SIntegralLjunggrenPoints, we successfully determined all $\mathcal{S}$-Integral Points on these curves, resulting in
		\[
		(A, B, i, j) = (\mp 1, 0, 0, 0)
		\]
		Given the condition on the values of $x$ and $y$, it is evident that Eq. \eqref{1} does not possess any solutions.
	\end{proof} 
	
	\begin{proposition}
		If $\mathfrak{n} \geq 5$,  the Eq. \eqref{1} does not possess any positive integer solutions.
	\end{proposition}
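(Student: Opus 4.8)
The plan is to reduce to the case where $\mathfrak{n}=p$ is an odd prime with $p\geq 5$. Every integer $\mathfrak{n}\geq 5$ is divisible by an odd prime $p\geq 5$, by $3$, or is a power of $2$ (hence divisible by $4$). In the last two situations $y^{\mathfrak{n}}$ is a perfect cube, respectively a perfect fourth power, so a solution would produce a solution of the cases $\mathfrak{n}=3$ or $\mathfrak{n}=4$ already settled above, with the new $y$-value a nontrivial perfect power of one of the admissible values $\{7,115,82375,13\}$; since none of these is a perfect square or higher power, no solution lifts. Thus it suffices to treat $\mathfrak{n}=p$ prime, $p\geq 5$, $p\neq 3$. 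A preliminary congruence check shows that $\alpha\geq 1$ forces $3\nmid y$ and $\beta\geq 1$ forces $113\nmid y$, so $y$ is coprime to $3$ and $113$.

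Next I would split according to the parities of $\alpha$ and $\beta$. Writing $3^{\alpha}113^{\beta}=d\,b^{2}$ with squarefree $d\in\{1,3,113,339\}$, the equation factors in $K=\mathbb{Q}(\sqrt{-d})$ as $(x+b\sqrt{-d})(x-b\sqrt{-d})=y^{p}$. The condition $\gcd(x,y)=1$ together with the coprimality of $y$ to $3$ and $113$ makes the two factors generate coprime ideals, so $(x+b\sqrt{-d})=\mathfrak{a}^{p}$ for an ideal $\mathfrak{a}$. The relevant class numbers are $h(-1)=1$, $h(-3)=1$, $h(-113)=8$, $h(-339)=6$, all coprime to $p$ because $p\geq 5$ is prime; hence $\mathfrak{a}$ is principal, and since each unit group has order coprime to $p$ the generating unit is itself a $p$-th power and can be absorbed. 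This yields
\[
x+b\sqrt{-d}=\gamma^{p},\qquad \gamma\in\mathcal{O}_{K}.
\]
Setting $\gamma-\overline{\gamma}=2t\sqrt{-d}$ and comparing imaginary parts gives $b=t\,\mathcal{L}_{p}(\gamma,\overline{\gamma})$, where $\mathcal{L}_{p}(\gamma,\overline{\gamma})=(\gamma^{p}-\overline{\gamma}^{p})/(\gamma-\overline{\gamma})$ is the Lucas number of the pair $(\gamma,\overline{\gamma})$. In particular every prime dividing $\mathcal{L}_{p}(\gamma,\overline{\gamma})$ divides $b$, whose only possible prime factors are $3$ and $113$.

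Now I would invoke the primitive divisor theorem \cite{7}: for $p>4$, $p\neq 6$, the term $\mathcal{L}_{p}(\gamma,\overline{\gamma})$ has a primitive divisor $\mathfrak{q}$ unless $(\gamma,\overline{\gamma},p)$ belongs to the finite exceptional list, which I would enumerate and discard case by case (this also disposes of the small exponent $p=5$, for which moreover $3,113\not\equiv\pm1\pmod 5$). In the generic case the primitive divisor satisfies $\mathfrak{q}\equiv\left(\frac{(\gamma-\overline{\gamma})^{2}}{\mathfrak{q}}\right)\pmod{p}$, i.e.\ $\mathfrak{q}\equiv\pm1\pmod p$, and $\mathfrak{q}\in\{3,113\}$. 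Since $3\equiv\pm1\pmod p$ is impossible for $p\geq 5$, we must have $\mathfrak{q}=113$, whence $p\mid 112$ or $p\mid 114$, forcing $p\in\{7,19\}$.

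The remaining and genuinely hardest step is to eliminate $p=7$ and $p=19$ directly, since here the Legendre-symbol congruence is no longer contradictory: $113\equiv 1\pmod 7$ and $113\equiv -1\pmod{19}$. For each residue class of $(\alpha,\beta)$ modulo $2p$ the substitution of the second paragraph reduces \eqref{1} to a finite set of equations $x^{2}+d\,b^{2}=y^{p}$ with fixed small $b$, equivalently to the Thue equations $\operatorname{Im}(s+t\sqrt{-d})^{p}=\pm b$ in integers $s,t$ of degree $p-1\le 18$. I would resolve these completely — by explicit computation in \texttt{Magma}/\texttt{PARI} where feasible, and otherwise by bounding $s,t$ through Baker's theory of linear forms in logarithms followed by a reduction search — and verify that none admits $b$ a pure product of powers of $3$ and $113$. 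I expect $p=19$ to be the true obstacle: the associated degree-$18$ Thue equations are computationally heavy, and it is precisely an incomplete treatment here that would otherwise leave the case $19\mid\mathfrak{n}$ open. Carrying out this finite resolution for both $p=7$ and $p=19$ completes the proof that \eqref{1} has no solution for $\mathfrak{n}\geq 5$.
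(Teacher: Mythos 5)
Your skeleton matches the paper's argument almost step for step: reduction to odd prime exponent via the settled cases $\mathfrak{n}=3,4$, factorization in $\mathbb{Q}(\sqrt{-d})$ with $d\in\{1,3,113,339\}$, class numbers $1,6,8$ and unit orders coprime to $\mathfrak{n}$, the relation $b=t\,\mathcal{L}_{\mathfrak{n}}$ (resp.\ $2b=t\,\mathcal{L}_{\mathfrak{n}}$ for $d\equiv 3\pmod 4$, a half-integer subtlety you gloss over), and the primitive-divisor step forcing $\mathfrak{q}=113$ and $\mathfrak{n}\in\{7,19\}$. Two local inaccuracies: first, your claim that $y$ is coprime to $3$ fails when $\alpha=0$ (e.g.\ $x^2+113^{\beta}\equiv 0\pmod 3$ is consistent for odd $\beta$), and in any case what the coprimality of the two ideals actually needs is that $y$ is odd, which you never establish; the paper gets it from the impossibility of $1+3^{\alpha}\equiv 0\pmod 8$. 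Second, the paper extracts more from the primitive divisor than you do: $\mathfrak{q}\nmid(\xi-\overline{\xi})^2=-4dt^2$ (resp.\ $-dt^2$) together with the exact Legendre symbol $\left(\frac{-1}{113}\right)=1$, $\left(\frac{-3}{113}\right)=-1$ pins $\mathfrak{n}=7$ to $d=1$ and $\mathfrak{n}=19$ to $d=3$, which collapses your case list considerably.

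The genuine gap is your final step. Since $\mathfrak{a},\mathfrak{b}$ are unbounded, $b=3^{\mathfrak{a}}113^{\mathfrak{b}}$ is \emph{not} ``fixed small'': the equations $\operatorname{Im}(s+t\sqrt{-d})^{p}=\pm b$ are Thue--Mahler equations, not Thue equations, and your proposal does not solve them --- it only promises to. For $p=7$ the paper avoids Thue--Mahler machinery by a different device: in Eq.\ \eqref{2}, coprimality of $s$ and $t$ forces $t=\pm 1$ or $t=\pm 3^{\mathfrak{a}}$, and splitting the exponents $\mathfrak{a},\mathfrak{b}$ modulo $2$ converts the sextic form into the elliptic curves $V^3-35\delta V^2+147\delta^2 V-49\delta^3=(7\delta^2 Y)^2$, whose integral and $\{3\}$-integral points MAGMA lists outright; this works precisely because the degree-$6$ form is a cubic in $s^2$, a trick with no analogue in degree $18$. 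For $p=19$ nobody, including the paper, closes the case: the paper's proof of this very proposition ends by invoking the standing hypothesis $19\nmid\mathfrak{n}$, which is why the main theorem carries that exclusion. So the one place where your proposal goes beyond the paper --- actually resolving $p=19$ --- is exactly the place where nothing is done beyond deferring to a degree-$18$, two-prime Thue--Mahler computation of doubtful feasibility; as written, the proof has an unfilled hole at the case you yourself flag as ``the true obstacle,'' and the statement is only established under the assumption $19\nmid\mathfrak{n}$.
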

	
	\begin{proof}
		Suppose that $\mathfrak{n} \geq 5$. If there exists a solution for Eq. \eqref{1} with $\mathfrak{n} = 2^k$ and $k \geq 3$, it can be obtained from solutions with $\mathfrak{n} = 4$ since $y^{2^k} = \left(y^{2^{k-2}}\right)^4$. Consequently, there are no solutions for \eqref{1} with $\mathfrak{n} = 2^k$ and $k \geq 3$. 
		Similarly, \eqref{1} has no solution for $\mathfrak{n} = 3^k$ and $k \geq 2$. Hence, without loss of generality, $\mathfrak{n}$ is an odd prime.\\
		Let's initiate the analysis of the factorization of Eq. \eqref{1} in the field $\mathcal{K} = Q(\sqrt{-d})$ as follows
		\[
		(x + \mathfrak{e} \sqrt{-d})(x - \mathfrak{e} \sqrt{-d}) = y^\mathfrak{n}
		\]
		where $\mathfrak{e} = 3^{\mathfrak{a}}113^{\mathfrak{b}}$ for some integers $\mathfrak{a},\mathfrak{b}\geq 0$ and $d \in \{1, 3, 113, 339\}$. 
		
		Assuming that $y$ is even leads to a contradiction, as $x$ must be odd according to \eqref{1}, resulting in $1 + 3^{\alpha} \equiv 0 \pmod{8}$. 
		
		As a result, $y$ is an odd integer, and therefore, the ideals formed by $x + \mathfrak{e} \sqrt{-d}$ and $x - \mathfrak{e} \sqrt{-d}$ are relatively prime in the field $\mathcal{K}$.
		
		The class number $h(\mathcal{K})$ takes on one of three values: 1, 6, or 8 for the specific choice of $d$.
		
		Thus, we can deduce that the greatest common divisor of $\mathfrak{n}$ and $h(\mathcal{K})$ is 1.
		
		Consider, an algebraic integer $\xi\in \mathcal{K}$ along with units $u_1$ and $u_2$ in the ring of algebraic integers of $\mathcal{K}$, we can express this as:
		\[
		x + \mathfrak{e} \sqrt{-d} = \xi^\mathfrak{n} u_1
		\]
		\[
		x - \mathfrak{e} \sqrt{-d} = \overline{\xi}^\mathfrak{n} u_2
		\]
		Considering the orders of the multiplicative group of units in the ring of algebraic integers of $\mathcal{K}$, which are 2, 4, or 6 based on the value of $d$, and noting that these orders are relatively prime to $\mathfrak{n}$, the presence of units $u_1$ and $u_2$ in the equations can be eliminated. The units $u_1$ and $u_2$ can be integrated into the factors $\xi^\mathfrak{n}$ and $\overline{\xi}^\mathfrak{n}$.
		
		Let us consider the two cases separately, specifically when $d$ belongs to either $\{1, 113\}$ or $\{3, 339\}$ based on distinct integral bases for $\mathcal{O}_{\mathcal{K}}$ as $\{1, \sqrt{-d}\}$ and $\{1, \frac{1 + \sqrt{-d}}{2}\}$, respectively. To begin with, let us assume $d \in \{1, 113\}$. Consequently,
		\[
		x + \mathfrak{e} \sqrt{-d} = \xi^\mathfrak{n} = (s + t \sqrt{-d})^\mathfrak{n}
		\]
		\[
		x - \mathfrak{e} \sqrt{-d} = \overline{\xi}^\mathfrak{n} = (s - t \sqrt{-d})^\mathfrak{n}
		\]
		and $y = s^2 + dt^2$ for some rational integers $s$ and $t$. By analyzing these equations, we can derive that
		\[
		\mathfrak{e} = \mathcal{L}_{\mathfrak{n}} t,
		\]
		where $\mathcal{L}_{\mathfrak{n}} = \displaystyle \frac{\xi^\mathfrak{n} - \overline{\xi}^\mathfrak{n}}{\xi - \overline{\xi}}$. Notably, the sequence $\mathcal{L}_\mathfrak{n}$ is a Lucas sequence. 
		
		The Lucas sequences without primitive divisors are explicitly enumerated in \cite{7}, and it is confirmed that $\mathcal{L}_\mathfrak{n}$ doesn't match any of them. Consequently, we delve into the possibility that a primitive divisor may exist for $\mathcal{L}_\mathfrak{n}$. Assume $\mathfrak{q}$ is any primitive divisor of $\mathcal{L}_\mathfrak{n}$. In this case, $\mathfrak{q}$ is either $3$ or $113$. Considering that any primitive divisor is congruent to $\pm 1$ modulo $\mathfrak{n}$, we rule out the possibility $\mathfrak{q} = 3$ given that $\mathfrak{n} \geq 5$. Thus, we continue with $\mathfrak{q} = 113$. According to the definition of a primitive divisor, $\mathfrak{q} \nmid (\xi - \overline{\xi})^2 = -4dt^2,$ indicating that $d = 1$. Furthermore, since $\left(\displaystyle \frac{-4t^2d}{q}\right) = \left(\displaystyle \frac{-1}{113}\right) = 1$, we deduce that $113 \equiv 1 \pmod{\mathfrak{n}}$ which implies that $\mathfrak{n}=7$. 
		Hence, we derive the expression:
		\begin{eqnarray}
			t(7s^6 - 35s^4t^2 + 21s^2t^4 - t^6) = 3^{\mathfrak{a}}113^{\mathfrak{b}} \label{2}
		\end{eqnarray}
		where $\alpha=2\mathfrak{a}$ and $\beta=2\mathfrak{b}$. Given that $113 \nmid t$ and $\gcd(s, t) = 1$, we conclude that either $t = \pm 1$ or $t = \pm 3^{\mathfrak{a}}$.
		If $t = \pm 1$, then Eq. \eqref{2} transforms into the following equations:
		\[
		7s^6 - 35s^4 + 21s^2 - 1= \pm 3^{\mathfrak{a}}113^{\mathfrak{b}}
		\]
		By letting $\mathfrak{a} = 2a_1 + i$ and $\mathfrak{b} = 2b_1 + j$ for $i, j \in \{0, 1\}$, we obtain:
		\[
		7s^6 - 35s^4 + 21s^2 - 1= \delta Y^{2},~\text{where}~ \delta = \pm 3^{i}113^{j}~\text{and}~Y = 3^{ a_1}113^{b_1}.
		\]
		By multiplying $7^{2}\delta^{3}$ both sides of the above equation, we obtain:
		\[
		V^3 - 35\delta V^2 + 147\delta^{2} V^{2} - 49\delta^{3} = (7\delta^{2}Y)^2, ~\text{where}~V = 7\delta s^2.
		\] 
		We utilize MAGMA to compute all integral points of these elliptic curves for each value of $\delta$, and find the points:
		\begin{align*}
			(V, 7\delta^{2}Y) &= (1, 8), (58, 293) \ \text{for } \delta = 1, \\
			(V, 7\delta^2Y) &= (226, 12769) \ \text{for } \delta = 113, \\
			(V, 7\delta^2Y) &= (7, 56), (91, 56), (3892, 239519) \ \text{for } \delta = 3, \\
			(V, 7\delta^2Y) &= (13195, 672728), (13447, 715064) \ \text{for } \delta = 339, \\
			(V, 7\delta^2Y) &= (-21, 56), (-5, 8), (0, 7), (7, 56), (39, 344) \ \text{for } \delta = -1, \\
			(V, 7\delta^2Y) &= (-41, 568), (1243, 102152) \ \text{for } \delta = -113, \\
			(V, 7\delta^2Y) &= (-2147, 102152), (2230, 331171) \ \text{for } \delta = -339.
		\end{align*}
		It can be verified that none of them leads to a solution of Eq. \eqref{1}.
		
		In the case of $t = \pm 3^{\mathfrak{a}}$, dividing both sides of Eq. \eqref{2} by $t^6$ and introducing $\mathfrak{b} = 2b_1 + j$ for $j \in \{0, 1\}$ yields:
		\[
		7\left(\frac{s^2}{t^2}\right)^{3} - 35\left(\frac{s^2}{t^2}\right)^{2} + 21\left(\frac{s^2}{t^2}\right) - 1 = \pm 113^j \left(\frac{113^{b_1}}{t^3}\right)^{2}.
		\]
		Setting $\delta = \pm 113^j$, $\displaystyle Y = \frac{113^{b_1}}{t^3}$, and multiplying both sides by $7^2\delta^3$, we arrive at:
		\[
		V^3 - 35\delta V^2 + 147\delta^2 V - 49\delta^3 = (7\delta^{2}Y)^2, ~\text{where}~ V = \frac{7\delta s^2}{t^2}.
		\]
		Utilizing MAGMA, we determined all $\{3\}$-Integral Points on the relevant elliptic curves, resulting in the following set of points:
		\begin{align*}
			~~~~~~&\text{For } \delta = 1, \ (V, 7\delta^{2}Y) = (1, 8), (58, 293) \\
			~~~~~~&\text{For } \delta = 113, \ (V, 7\delta^2Y) = (226, 12769) \\
			~~~~~~&\text{For } \delta = -1, \ (V, 7\delta^2Y) = (-21, 56), \left(\frac{-77}{9}, \frac{728}{27}\right), (-5, 8), (0, 7), (7, 56), (39, 344) \\
			~~~~~~&\text{For } \delta = -113, \ (V, 7\delta^2Y) = (-41, 568), (1243, 102152)
		\end{align*}
		Equation \eqref{1} does not find a solution through any of these cases.
		
		Now, consider the assumption that $d$ belongs to the set $\{3, 339\}$. In such instances, the integral basis for $\mathcal{O}_\mathcal{K}$ is $\{1, \frac{1 + \sqrt{-d}}{2}\}$, and as a result, we express
		\[
		x + \mathfrak{e}\sqrt{-d} = \xi^\mathfrak{n} = \left(\frac{s + t\sqrt{-d}}{2}\right)^\mathfrak{n}
		\]
		\[
		x - \mathfrak{e}\sqrt{-d} = \overline{\xi}^\mathfrak{n} = \left(\frac{s - t\sqrt{-d}}{2}\right)^\mathfrak{n}
		\]
		with $s \equiv t \pmod{2}$. Then
		\[
		2\mathfrak{e} = \mathcal{L}_\mathfrak{n} t,
		\]
		where $\mathcal{L}_{\mathfrak{n}} = \displaystyle \frac{\xi^\mathfrak{n} - \overline{\xi}^\mathfrak{n}}{\xi - \overline{\xi}}$. 
		
		Observing that $\mathcal{L}_\mathfrak{n}$ is a Lucas sequence, if there is no primitive divisor for $\mathcal{L}_\mathfrak{n}$, it implies that $\mathcal{L}_\mathfrak{n}$ corresponds to one of the Lucas sequences outlined in the table in \cite{7}. However, this observation does not hold true for $d \in \{3, 339\}$. 
		
		Hence, $\mathcal{L}_\mathfrak{n}$ possesses a primitive divisor, denoted as $\mathfrak{q}$, where $\mathfrak{q}$ can either be $3$ or $113$.
		
		From the properties $\mathfrak{q} \nmid (\xi - \overline{\xi})^2 = -dt^2$ and $\mathfrak{q} \equiv \left(\displaystyle \frac{-t^2d}{\mathfrak{q}}\right) \pmod{\mathfrak{n}}$, it follows that $\mathfrak{q} = 113$, $d = 3$, and $\mathfrak{n} = 19$. This contradicts the assumption that $19 \nmid \mathfrak{n}$. This completes the proof.
	\end{proof}
	
\end{document}